\documentclass[12pt]{amsart}
\usepackage{amsmath}
\usepackage{amssymb}
\usepackage{amsfonts}
\usepackage{amsthm}
\usepackage{verbatim}
\usepackage{amscd}
\usepackage{cite}
\usepackage{leftidx}
\usepackage{enumerate}
\usepackage{txfonts}
\usepackage{manfnt}
\usepackage{amscd}
\usepackage[mathscr]{eucal}
\usepackage{hyperref}
\usepackage{graphicx}
\usepackage{palatino}

\textwidth 4.7in \textheight 7.5in
\oddsidemargin -.3in\textwidth 6.4in\evensidemargin-.3in
%
\hfuzz 5pt 


\def\tD{{\tilde D}}

\DeclareMathOperator{\del}{\partial}

\DeclareMathOperator{\dlog}{dlog}

\def\refp #1.{(\ref{#1})}
\newcommand\carets [1]{\langle #1 \rangle}

\def\sbr #1.{^{[#1]}}
\def\sfl #1.{^{\lfloor #1\rfloor}}

\def\what{\widehat}

\def\?{{\bf{??}}}

\def\dgla{differential graded Lie algebra\ }

\def\HH{\mathbb H}

\def\C{\mathbb C}
\def\P{\mathbb P}

\def\sym{\text{\rm Sym} }

\def\pf{\mathrm {Pf}}

\def\O{\mathcal O}

\def\g{\mathfrak g}

\def\1/2{\frac{1}{2}}

\def\I{\mathcal{ I}}

\def\d{\mathrm{d}}

\def\2{{[2]}}

\def\nl{\newline}

\def\<{\langle}
\def\>{\rangle}

\def\2{{[2]}}

\def\Def{\mathrm{Def}}

\def\scl #1.{^{\lceil#1\rceil}}
\def\spr #1.{^{(#1)}}
\def\sbc #1.{^{\{#1\}}}

\def\subpr#1.{_{(#1)}}
\def\ds{\vskip 1cm}

\def\beq{\begin{equation*}}
\def\eeq{\end{equation*}}

\newcommand{\sing}{{\mathrm{sing}}}

\newcommand{\llog}[1]{\langle\log {#1} \rangle}
\newcommand{\mlog}[1]{\langle-\log {#1} \rangle}
\def\g3{{\Gamma\spr 3.}}

\def\Def{\mathrm {Def}}

\newcommand{\eqspl}[2]{
\begin{equation}\label{#1}
\begin{split}
#2\end{split}\end{equation}}

\newcommand{\beginalphaenum}{
\begin{enumerate}\renewcommand{\labelenumi}{ }
\item \begin{enumerate}
}

\def\eex{\end{rm}\end{example}}


\newtheorem*{lem*}{Lemma}

\pagestyle{plain}

\newtheorem{thm}{Theorem} 

\newtheorem*{thm*}{Theorem}
\newtheorem*{prop*}{Proposition}
\newtheorem{cor}[thm]{Corollary}
\newtheorem*{cor*}{Corollary}

\newtheorem{lem}[thm]{Lemma}

\newtheorem*{claim*}{Claim}
\newtheorem{prop}[thm]{Proposition}

\newtheorem{defn}[thm]{Definition}
\theoremstyle{remark}

\newtheorem{rem}[thm]{Remark}
\newtheorem{crit-rem}[thm]{Critical remark}
\newtheorem{remarks}[thm]{Remarks}
\newtheorem*{remarks*}{Remarks}

\newtheorem{example}[thm]{Example}
\newtheorem*{example*}{Example}

\newtheorem*{defn*}{Definition}

\begin{document} 
\title{Deformations of holomorphic \\ pseudo-symplectic Poisson manifolds}
\author 
{Ziv Ran}



\thanks{arxiv.org/1308.2442}
\date {\today}


\address {\nl UC Math Dept. \nl
Big Springs Road Surge Facility
\nl
Riverside CA 92521 US\nl 
ziv.ran @  ucr.edu\nl
\url{http://math.ucr.edu/~ziv/}
}

 \subjclass[2010]{14J40, 32G07, 32J27, 53D17}
\keywords{Poisson structure, 
symplectic structure, complex structure, deformation theory, 
differential graded Lie algebra, Schouten algebra, log complex, mixed Hodge theory}

\begin{abstract}
We prove unobstructed deformations for
 compact K\"ahlerian even-dimensional Poisson manifolds whose Poisson tensor degenerates
along a divisor with mild singularities. Examples include Hilbert schemes of del Pezzo surfaces.

\end{abstract}
\maketitle
By definition, a \emph{pseudo-symplectic}
\footnote{sometimes called log-symplectic in the literature}
 manifold is a compact complex
manifold $X$ satisfying the $\del\bar\del$ lemma,
 endowed with a (holomorphic) Poisson structure $\Pi\in H^0(\bigwedge\limits^2 T_X)$
which is generically of full rank $2n:=\dim(X)$. This implies that 
 $\Pi$ degenerates along a (possibly trivial) anticanonical
Pfaffian divisor \[D=\pf(\Pi)=[\Pi^{n}]\in |-K_X|.\]
 $D$ is trivial precisely when $\Pi$ is
a symplectic Poisson structure. We will denote by $\pf^k(\Pi)$ the locus where
$\Pi$ has corank $2k$ or more; in terms of a skew-symmetric matrix
representation of $\Pi$, $\pf^k(\Pi)$ is the locus defined by the $(2k)\times(2k)$
minors based on the same set of rows as columns.\par
Our interest in this paper is in deformations of pseudo-symplectic Poisson manifolds whose Pfaffian
divisor has mild singularities. A strong form of mildness is the following.
\begin{defn}A pseudo-symplectic Poisson manifold
$(X, \Pi)$ is said to be 
\emph{Pfaffian-normal} or \emph{P-normal} if
$D=\pf(\Pi)$ is reduced with (local) normal crossings, and
$D$ is smooth at every point where $\Pi$ has corank exactly 2.
\end{defn}
\begin{remarks*}
(i) The P-normal hypothesis
means, besides normal crossings for $D$, that $\Pi$ has corank $>2$ at all singular points of $D$.
Thus, for example, the Poisson structure $x_1x_2\del_{x_1}\wedge\del_{x_2}$ on the plane is not P-normal,
though its Pfaffian has normal crossings. On the other hand, the product of smooth surfaces
each endowed with a smooth anticanonical divisor admits a natural P-normal Poisson structure.
\par
(ii) We will prove below (Proposition \ref{normal-form-prop})
that P-normal Poisson structures admit the following local normal form
\eqspl{normal-form-eq}{
x_1\del_{x_1}\wedge\del_{y_1}+...+x_k\del_{x_k}\wedge\del_{y_k}+
\del_{x_{k+1}}\wedge\del_{y_{k+1}}+...+\del_{x_n}\wedge\del_{y_n}.
}
Here $k$ is the miltiplicity of $D$ at the origin, which coincides with
half the corank.

\end{remarks*}
\par P-normal Poisson structures other than products seem a bit hard to find 'in nature', so we make the following more general
definition, whose import is that the Pfaffian divisor has, in a strong sense, 'generic' singularities.
\begin{defn}
A Poisson structure  $(X, \Pi)$ is said to  be \emph{weakly P-normal } if  for every $k\geq 1$\par
(a) $\pf^k(\Pi)$ has codimension $k$ and $\pf^k(\Pi)-\pf^{k+1}(\Pi)$ is smooth;\par
(b) generically along $\pf^k(\Pi) $, $\pf^{k-1}(\Pi)$ has $k$ smooth pairwise transverse
branches;\par
(c) in the iterated blowup of $X$ along the flag $\pf^n(\Pi)\subset \pf^{n-1}(\Pi)\subset
...\subset\pf^2(\Pi)$, the reduced total transform of $\pf(\Pi)$ is a divisor with normal crossings.
\end{defn}

\begin{remarks*}

(i) As a consequence of the above normal form, we will show below (Corollary \ref{p-wp-cor}) 
that P-normality implies weak P-normality.
\par
(ii) The notion of weak P-normality is
motivated the following example. Any anticanonical curve $C$ on a smooth surface $S$
is the degeneracy divisor of a Poisson structure $\Pi$ on $S$.
By a construction of Bottacin \cite{bottacin} (see below, \S\ref{hilb}), 
$\Pi$ induces a Poisson structure
$\Pi\sbr r.$ on the punctual Hilbert scheme $S\sbr r.$.
It is proven in \S \ref{hilb} for $r=2$ and in \cite{incidence} for $r\geq 2$
that whenever $C$ is smooth, $\Pi\sbr r.$ is weakly P-normal (but not P-normal).
This yields a large class of weakly P-normal Poisson structures on, e.g. Hilbert schemes of
Del Pezzo surfaces, and our unobtructedness results apply to those.
\end{remarks*}
If $\pf(\Pi)$ is trivial, then $\Pi$ is a symplectic Poisson structure, 
so $X$ is a Calabi-Yau ($K$-trivial) manifold, hence by
a well-known result of Tian-Todorov has unobstructed deformations. 
At the other extreme, if $\pf(\Pi)$ is ample, then $X$ is Fano, hence trivially
has unobstructed deformations (a less trivial unobdstructedness result for 
\emph{weak} Fano manifolds was recently obtained by T. Sano \cite{sano}).
If $D=\pf(\Pi)$ is smooth, 
then a result of \cite{iacono} (sketched
in the Appendix, \S\ref{appendix} below, assuming only $D$ has normal crossings) 
shows that
the pair $(X, D)$ has unobstructed locally trivial deformations.
Goto \cite{goto} and Hitchin \cite{hitchin-poisson} have obtained some
results on unobstructedness of certain 'special' (and 1-parameter)
Poisson deformations.
See also \cite{fiorenza-manetti}.
Our purpose
here is to sharpen these results in the weakly P-normal case, by proving that a for 
weakly P-normal
Poisson manifold $(X, \Pi)$, all (Poisson) deformations are
unobstructed, 
i.e. the full Poisson deformation space $\mathrm{Def}(X, \Pi)$ is smooth:
\begin{thm}\label{main-thm}
Let $(X, \Pi)$ be a weakly P-normal Poisson manifold with Pfaffian divisor $D$. Then\par
(i)  $(X,\Pi)$ has unobstructed Poisson deformations.\par
(ii) Poisson deformations of $(X, \Pi)$ induce locally trivial deformations on $D$.
\par 
(iii) There is a space of deformations of the triple $(X, \Pi, D)$,
 whose
forgetful morphisms to $\mathrm{Def}(X, \Pi)$ and $ \mathrm{Def}_{\mathrm{loc.\ trivial}}(X,D)$
are both smooth.

\end{thm}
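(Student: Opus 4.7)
The plan is to construct a single differential graded Lie algebra $L$ which simultaneously controls the three deformation problems in the theorem, and then to prove unobstructedness of its Maurer--Cartan equation by a Tian--Todorov-style argument using mixed Hodge theory for the log pair $(X, D)$. The crucial structural input is Weinstein's normal form \eqref{normal-form-eq}: at each smooth point of $D$, the Poisson tensor $\Pi$ actually takes values in $\wedge^2 T_X(-\log D)$, and under the P-normal hypothesis this extension across the singular locus of $D$ persists because $\Pi$ has corank $>2$, hence vanishes to higher order, there.

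First I would introduce the Dolbeault-resolved log polyvector DGLA
\[
L^k \;=\; \bigoplus_{p+q\,=\,k} A^{0,q}\!\left(X,\;\wedge^{p+1} T_X(-\log D)\right),
\]
with Schouten bracket and differential $\bar\partial + [\Pi, -]$. By the observation above $\Pi$ is a Maurer--Cartan element in $L^1$, so the construction is well-posed. I would then argue that Maurer--Cartan elements of $L$ modulo gauge equivalence classify deformations of the triple $(X, \Pi, D)$ in which $D$ remains a normal-crossing divisor and $\Pi$ remains a Poisson tensor with Pfaffian divisor $D$. The two forgetful maps of (iii) would be realized by DGLA morphisms from $L$: one to the ordinary Poisson DGLA $A^{0,\bullet}(\wedge^{\bullet+1} T_X)$ of $(X, \Pi)$, and one to the locally trivial log-deformation DGLA $A^{0,\bullet}(T_X(-\log D))$ that appears in the \cite{iacono}-style argument sketched in \S\ref{appendix}.

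The heart of the matter is unobstructedness of the Maurer--Cartan equation in $L$. Following Tian--Todorov, I would introduce a $\partial$-type operator on log polyvector fields (obtained by contracting with a local holomorphic volume form or with $\Pi$ itself outside $D$) and establish a log-polyvector $\partial\bar\partial$-lemma. The inputs are: the $\del\bar\del$ lemma assumed on $X$; Deligne's degeneration of the log Hodge-to-de-Rham spectral sequence for $(X, D)$; and a Schouten-algebraic identity expressing $[\Pi, -]$ as a graded commutator involving this $\partial$-operator. Given the lemma, the standard order-by-order Kuranishi recursion in the formal parameter produces a power-series Maurer--Cartan solution, yielding (i). Claim (iii) follows since $L$ itself governs triple deformations and the two forgetful DGLA maps induce the required smooth forgetful morphisms on deformation functors. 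For (ii) one notes that the first forgetful morphism is surjective on Maurer--Cartan elements (every Poisson deformation lifts to a deformation of the triple), whence each Poisson deformation does restrict to a locally trivial deformation of $D$.

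The main obstacle will be the behavior at the singular locus of $D$, where the Weinstein normal form is not available. The P-normal hypothesis, forcing $\Pi$ to have corank $\geq 4$ and hence vanish to sufficient order there, must be used both to see that $\Pi \in H^0(\wedge^2 T_X(-\log D))$ globally and to verify that the log $\partial\bar\partial$-lemma and the Schouten commutator identity remain valid across this codimension $\geq 2$ stratum. A secondary technical point is to identify the Maurer--Cartan functor of $L$ precisely with the deformation functor of the triple $(X, \Pi, D)$, so that smoothness of the forgetful maps on DGLAs translates into smoothness on the geometric deformation spaces.
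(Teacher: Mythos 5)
Your first half is sound in outline and close in spirit to the paper: the DGLA you call $L$ is (a Dolbeault resolution of) the log Schouten algebra $T^.\llog{D}$; $\Pi$ is indeed a section of $T^2\llog{D}$ (by the Weinstein normal form at smooth points of $D$ plus Hartogs across the codimension-two singular locus -- P-normality is not needed for that); and the unobstructedness of this log DGLA is correct. The paper obtains it not from a log polyvector $\partial\bar\partial$-lemma but from the duality isomorphism $\Pi^\#:\Omega^.\llog{D}\to T^.\llog{D}$ of dglas (Lemma \ref{duality-iso-lem}), which transports the vanishing of the bracket-induced pairing on $\HH^\bullet(\Omega^.\llog{D})$ -- a consequence of Deligne's $E_1$-degeneration together with the Cartan formula -- over to the log Schouten side. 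That isomorphism is the first place the P-normal hypothesis genuinely enters; your proposed contraction with a volume form would have to be a contraction with a log (meromorphic, anticanonical) volume form or with $\Pi$ itself, which amounts to the same lemma, so this part is a legitimate variant rather than a gap.

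The genuine gap is in your treatment of the forgetful map to $\mathrm{Def}(X,\Pi)$, i.e.\ statements (i) and (ii). You assert that ``every Poisson deformation lifts to a deformation of the triple,'' but this is exactly the nontrivial point: the inclusion of DGLAs $T^.\llog{D}\hookrightarrow T^.$ does not by itself make the induced morphism of deformation functors surjective or smooth, and unobstructedness of the source says nothing about the target without such a statement. The paper's Step 2 supplies the missing idea: one builds an auxiliary complex $K^.$ quasi-isomorphic to $T^.\llog{D}$, with $K^i=T^{i+1}\oplus T^{i-1}\otimes N^0_D$, together with a map $T^.\to K^.$ whose essential piece $T^2\to N_D$ is the derivative of the Pfaffian map, $u\mapsto n\,u\wedge\Pi^{n-1}|_D$. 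P-normality enters a second, decisive time here: since $\Pi$ has corank $\geq 4$ along $\sing(D)$, the power $\Pi^{n-1}$ vanishes on $\sing(D)$, so this map factors through the locally trivial subsheaf $N^0_D\subset N_D$. This exhibits $T^.$ as a direct summand of $T^.\llog{D}$ in the derived category, giving (i), and the factorization through $N^0_D$ is precisely assertion (ii). Without an argument of this kind your proposal proves only the unobstructedness of the triple deformations and the smoothness of the map to $\mathrm{Def}_{\mathrm{loc.\ trivial}}(X,D)$, i.e.\ part of (iii), but not parts (i) and (ii) of the theorem.
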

We do not prove that the natural morphism $\mathrm{Def}(X, \Pi)\to \mathrm{Def}_{\mathrm{loc.\ trivial}}(X,D)$ is smooth. However, the fact that every locally trivial deformation of $(X, D)$ lifts
to a deformation of $(X, \Pi, D)$ is sufficient to prove:
\begin{cor}
Given a weakly P-normal Poisson manifold $(X,  \Pi)$ with 
Pfaffian $D$ and a deformation $\tilde X$
of $X$, $\Pi$ extends to $\tilde X$ iff $D$ extends to $\tilde X$ locally trivially.
\end{cor}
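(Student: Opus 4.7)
The corollary is the sort of formal consequence one expects to extract from parts (ii) and (iii) of the Main Theorem, so the strategy is to prove the two implications separately, each by direct appeal to the theorem.

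For the forward direction ($\Pi$ extends $\Rightarrow$ $D$ extends locally trivially), suppose $\tilde\Pi$ is a Poisson extension of $\Pi$ to $\tilde X$. Then $(\tilde X,\tilde\Pi)$ is a Poisson deformation of $(X,\Pi)$, and by part (ii) of the Main Theorem the induced deformation of the Pfaffian divisor---namely $\pf(\tilde\Pi)\subset\tilde X$---is locally trivial over $D$. Thus $D$ extends to $\tilde X$ locally trivially, and this half of the corollary is immediate.

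The converse is where the real content lies. Given a locally trivial extension $\tilde D\subset\tilde X$ of $D$, I want to produce a Poisson extension of $\Pi$ to $\tilde X$. The Main Theorem pointedly does \emph{not} assert smoothness of the direct morphism $\mathrm{Def}(X,\Pi)\to\mathrm{Def}_{\mathrm{loc.\ trivial}}(X,D)$, so my plan is to route through the triple deformation space of part (iii): the forgetful morphism $\mathrm{Def}(X,\Pi,D)\to\mathrm{Def}_{\mathrm{loc.\ trivial}}(X,D)$ is smooth by (iii), and smoothness of morphisms of deformation functors over the category of Artin local rings is by definition the infinitesimal lifting property. Applied to the given deformation $(\tilde X,\tilde D)\in\mathrm{Def}_{\mathrm{loc.\ trivial}}(X,D)$, this produces a lift $(\tilde X,\tilde\Pi,\tilde D)$ of the triple with the \emph{same} underlying pair $(\tilde X,\tilde D)$. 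Forgetting $\tilde D$ yields the desired Poisson extension $\tilde\Pi$ of $\Pi$ on $\tilde X$.

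The only point that might warrant care is verifying that the lift really preserves the underlying $\tilde X$ rather than replacing it with an isomorphic deformation; this is automatic once smoothness is read in its standard deformation-theoretic sense as a lifting property over a fixed Artinian base, and extends to formal bases by passing to the projective limit. No ingredients beyond the Main Theorem itself should be needed.
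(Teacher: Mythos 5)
Your proof is correct and follows essentially the same route as the paper: the forward direction is immediate from part (ii), and the converse uses exactly the lifting property furnished by the smoothness of the forgetful morphism $\mathrm{Def}(X,\Pi,D)\to \mathrm{Def}_{\mathrm{loc.\ trivial}}(X,D)$ from part (iii), which is precisely the observation the paper makes just before stating the corollary. Your side remark about the lift preserving the underlying deformation $\tilde X$ (up to the isomorphism implicit in the deformation functor, along which one transports $\tilde\Pi$) is handled correctly.
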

As noted above, these results apply to the Hilbert Poisson manifolds $(S\sbr r., \Pi\sbr r.)$
whenever the curve $[\Pi]$ is smooth. To put matters in perspective
however, there exist even Poisson \emph{surfaces} with obstructed deformations,
as the following example shows. In this example, the Pfaffian divisor $[\Pi]$
is non-reduced with normal crossings support; we don't know an example 
of an obstructed Poisson surface with reduced, normal-crossings Pfaffian. 
\footnote{A statement in \cite{katzarkov-kontsevich-pantev}
(Lemma 4.19) seems to imply no such example exists} \par
\begin{example*}
Let $L$ be a line in $\P^2$, $p_1,...,p_4$ general points on $L$,
and $p_5, p_6, p_7$ general points in $\P^2$. Let
$Z=(2p_1+2p_2)_L+p_3+...+p_7$. Let
$C$ be the unique conic through $p_1, p_2, p_5,p_6,p_7$.
Let $\pi:X\to\P^2$ be the blowup of $Z$, with exceptional divisor $E$
(blowing up $2p_L$ means blow up $p$ then blow up
the appropriate point on the exceptional line).
Let $D=\pi^*(L+C)-E$, an anticanonical divisor on $X$
with normal crossing support (but 2 components of multiplicity 2),
which corresponds to a Poisson structure $\Pi$.\par
The pair $(X, D)$ can be deformed (generalized) in 2 ways:\par
1. Deform $(2p_i)_L$ to a general length-2 scheme supported at $p_i$,
$i=1,2$,
which is still a subscheme of $L+C$.\par
2. Deform $C$ to a general conic through $p_5, p_6, p_7$ (not going
through $p_1, p_2$).\par
Thus, the deformation space of $(X, D)$, or equivalently of $(X, \Pi)$,
is locally reducible, hence obstructed.\qed
\par
\includegraphics[scale=.25]{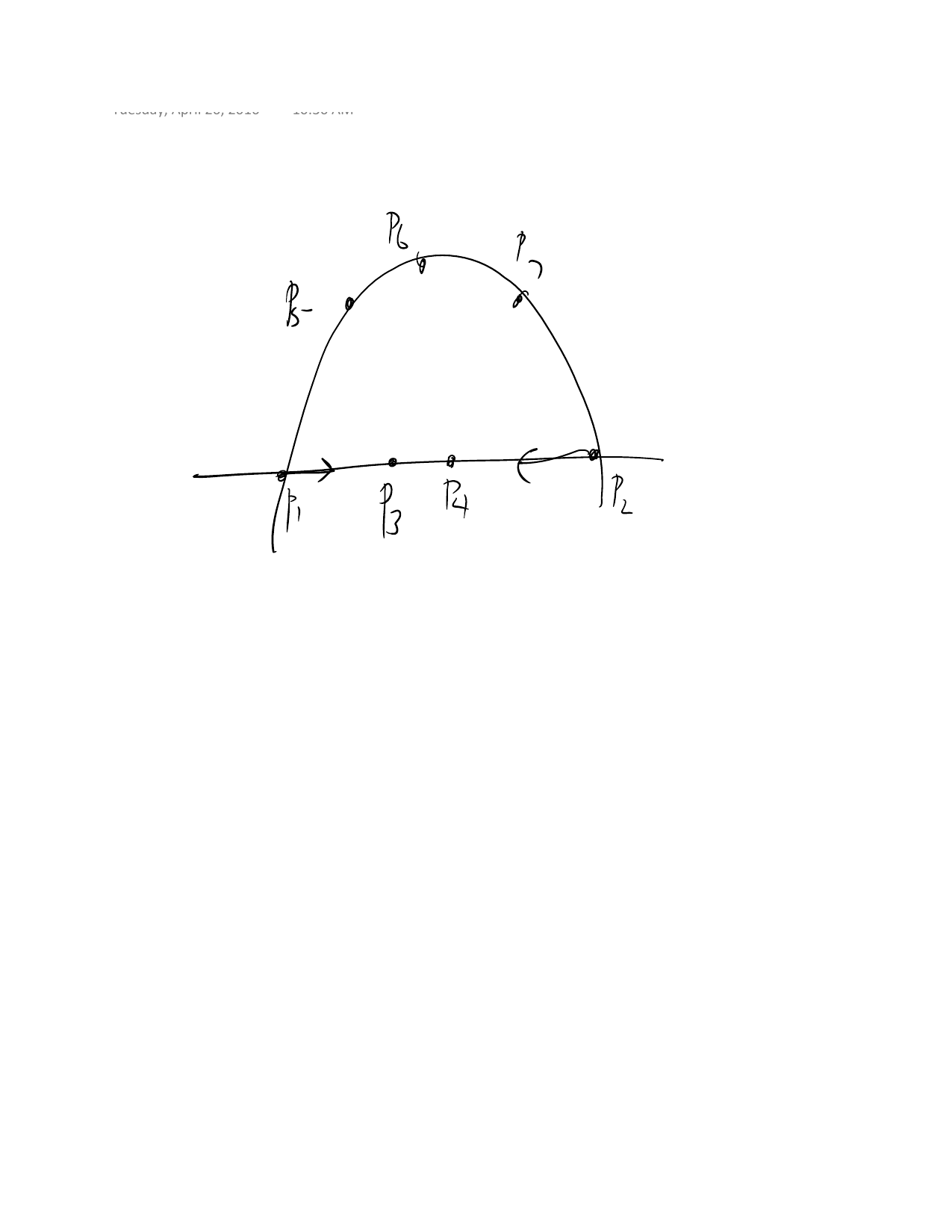}
\end{example*}
%
%

The strategy of the proof of the Theorem in the P-normal case
is analogous to that in the symplectic case: namely, relate infinitesimal
symmetries (in this case, a certain 'log Schouten dgla') to differentials (in this case,
a certain log complex), and apply Hodge theory (in this case, Deligne's $E_1$
degeneration theorem for the log complex). The strategy in the weakly P-normal case
is to relate deformations of $(X, \Pi)$ to those of a suitable P-normal blowup.\par
\subsection*{Acknowledgments} I thank Chunghoon Kim for many
instructive discussions about Poisson manifolds. I thank the referee
for his helpful comments and  especially for providing some missing references. 
\section{Basics and examples of Poisson structures}
See \cite{dufour-zung} or \cite{ciccoli} for basic facts on 
Poisson manifolds and \cite{ginzburg-kaledin} (especially the appendix)
or \cite{namikawa}, and references therein
 for information on deformations of
Poisson complex structures. For a different, more elementary 
construction of deformations,
see the Riverside thesis of C. Kim (to appear).
\subsection{Schouten algebra} See \cite{brylinski}.
Fixing a manifold $X$, we denote by $T=T_X$ its tangent Lie algebra of holomorphic vector
fields. Then the exterior algebra $T^.=\bigoplus \bigwedge\limits^iT$ is endowed
with a graded bracket known as the Schouten bracket, extending the Lie bracket,
with the property that
\eqspl{schouten}{
[P, Q\wedge R]=[P,Q]+(-1)^{(p-1)q}Q\wedge[P, R], P\in T^p, Q\in T^q.
} A Poisson structure on $X$ is by definition a tangent bivector field $\Pi\in H^0(\bigwedge\limits^2 T)$
satisfying the integrability condition  $[\Pi, \Pi]=0$.
A Poisson structure $\Pi$ defines on $T^.$ the structure of a complex with differential
$[.,\Pi]=L_\Pi(.), L=$ Lie derivative, 
hence a structure of \emph{\dgla} (dgla) called the \emph{Schouten algebra}, 
which is just the algebra
of infinitesimal symmetries of the pair (complex structure, Poisson structure). 
The deformation theory associated to
the Schouten dgla  is likewise the deformation theory of the Poisson structure $\Pi$
together with the underlying complex structure.\par
The Schouten dgla $T^. $ admits as dgla module the 'reverse de Rham' dg complex
$(\Omega^., \delta)$, with codiferential
\eqspl{codifferential}{\delta=L_\Pi=i_\Pi d-d i_\Pi,}
of  degree -1 and square zero thanks to $[\Pi, \Pi]=0$.
This differential has the following properties, which together determine it uniquely:
\eqspl{co-derivative-eq}{
(i) \ \delta(f\omega)&=f\delta(\omega)+i_{\nabla^\Pi(f)}\omega, \nabla^\Pi(f)=i_{df}\Pi,\\
(ii) \ \delta(df)&=0}
These imply
\eqspl{co-derivative-2-eq}{
(i) [d, \delta]=0,\\
(ii) \  \delta(df_1\wedge df_2)&=\delta(f_1df_2)=i_{df_1\wedge df_2}\Pi,\\
(iii) \  \delta(df_1\wedge...\wedge df_k)&=\sum (-1)^{i+j}\delta(df_i\wedge df_j)\wedge df_1\wedge...\what{df_i}...\what{df_j}...\wedge df_k
} Note that in view of these properties, the restriction $\delta\spr 1.$ of $\delta$ on $\Omega^1$ 
determines $\delta$ on the
rest of the complex. Also, any bivector field $\Pi$ allows us to define maps $\delta\spr 1.,...,\delta\spr n.$,
but it is the integrability condition $[\Pi,\Pi]=0$ - which is equivalent to
 $\delta\spr 2.\delta\spr 1.=0$ - that makes it a complex.\par
Poisson deformations are $T^.$-deformations of this complex, i.e. deformations $\tilde X/S$ of
the complex structure of $X$, together with deformations $\tilde\delta$ 
of the codifferential $\delta $ on the relative de Rham
complex $\Omega^._{\tilde X/S}$ satisfying \eqref{co-derivative-eq},\eqref{co-derivative-2-eq}, 
which
simply mean that $\tilde\delta$ comes from a relative tangent bivector field
 $\tilde\Pi\in H^0(\bigwedge\limits^2 T_{\tilde X/S})$ on
$\tilde X/S$, plus the square-zero condition, which is equivalent to
 $\tilde\delta\spr 1.\tilde\delta\spr 2.=0$, and then again to $[\tilde\Pi, \tilde\Pi]=0$.
 The dgla homomorphism $T^.\to T$ induces a morphism of formal deformation
 spaces \[\mathrm{Def}(X,\Pi)\to \mathrm{Def}(X).\]
In general,  this morphism is clearly not injective (and probably not surjective).
\subsection{An identity}
For future reference, we note an identity. Let $\Phi$ be a closed holomorphic  form, defined locally. Then,
where $L$ denotes Lie derivative, we have by definition,
\[L_\Pi\carets{\Pi, \Phi}=d\carets{\Pi^2, \Phi}-\carets{\Pi, d\carets{\Pi, \Phi}}.\]
On the other hand by the derivation property of Lie derivatives and
the fact that $L_\Pi\Pi=[\Pi, \Pi]=0$, we have
\[L_\Pi\carets{\Pi, \Phi}=\carets{\Pi, L_\Pi\Phi}=\carets{\Pi, d\carets{\Pi, \Phi}}.\]
Consequently,
\[ d\langle \Pi^2, \Phi\rangle=2\langle\Pi, d\langle\Pi, \Phi\rangle\rangle\]
and inductively,
\[d\carets{\Pi^k, \Phi}=k\carets{\Pi^{k-1}, d\carets{\Pi, \Phi}}.\]
Consequently, we have
\eqspl{pfaffian-identity}{
\carets{\Pi, d\carets{\Pi^n,\Phi}}=n\carets{\Pi, \carets{\Pi^{n-1}, \carets{d\carets{\Pi, \Phi}}}}
=n\carets{\Pi^n, d\carets{\Pi, \Phi}}.
}
Now suppose $X$ is  a pseudo-symplectic Poisson manifold of dimension $2n$,
and $\Phi$ is a local holomorhic volume form. Then
$\carets{\Pi^n, \Phi}$ is a local equation for the Paffian divisor $D$, so at a smooth point of $D$,
$d\carets{\Pi^n, \Phi}$ generates the conormal sheaf. So the latter identity shows that
the conormal sheaf to $D$ is contained in the kernel of $\Pi$ along $D$ (which has rank 2 or more),
thus:
\begin{lem}\label{conormal-in-kernel-lem}
If $(X, \Pi)$ is pseudo-symplectic with Pfaffian divisor $D$, then at a smooth point of $D$ the conormal line
is contained in the kernel of $\Pi$.
\end{lem}
\subsection{Duality}
$\Pi$ gives rise to various duality or interior multiplication maps, such as 
\[\Pi^\#:\Omega\to T,\]
\[i_\Pi:\Omega^p\to\Omega^{p-2}, p\geq 2.\]
\subsection{Poisson- de Rham algebra}
A fundamental feature of Poisson geometry is that the cotangent exterior algebra $\Omega_X^.$
is endowed with a graded Lie-Poisson bracket with the properties
\eqspl{}{
[df, dg]=d\{f, g\},\\
[\omega_1, f\omega_2]=f[\omega_1, \omega_2]+\Pi(\omega_1\wedge df)\omega_2,
} plus a derivation property analogous to \eqref{schouten}.\par
The standard duality map
\[\Pi^\#:\Omega\to T\]
extends to
\eqspl{}{
\bigwedge\Pi^\#:(\Omega^., \wedge, [,])\to (T^., \wedge, [,])
} as a graded homomorphism with respect to both $\wedge$ and $[,]$
products.
\subsection{Products}\label{products}
Note that if $(X_i, \Pi_i)$ are Poisson manifolds, then $X=\prod\limits_i X_i$
admits the Poisson structure $\Pi=\sum\limits_i p_{X_i}^*(\Pi_i)$, called the product
structure  of the $(X_i, \Pi_i)$. Moreover, if each $(X_i, \Pi_i)$ is P-normal, then
so is $(X, \Pi)$.\par
Note that an effective anticanonical divisor $D$ on a smooth surface $X$ 
determines a Poisson structure on $X$,
which is P-normal if $D$ is smooth. Taking products of these yields examples
of P-normal Poisson manifolds with nonsmooth Pfaffian divisor.
\subsection{Hilbert schemes: Poisson au  Beauville}\label{hilb}
Beauville \cite{bvl} has constructed a symplectic structure on the Hilbert scheme
of a symplectic (i.e. K3 or abelian) surface. This construction was extended
by Bottacin \cite{bottacin} to yield
a Poisson structure $\Pi\sbr r.$ on the Hilbert scheme $S\sbr r.$ of a Poisson (i.e. anticanonical) 
surface $(S, \Pi)$- an object in much greater supply than its symplectic equivalent.
This structure is not P-normal, but as proven in \cite{incidence}, it is weakly P-normal, provided
the corresponding anticanonical curve $\pf(\Pi)$ is smooth.
Here we will give an explicit construction of $\Pi\sbr r., r\geq 2$,
and a proof of weak P-normality in the initial case
$r=2$.\par
Thus, let $S$ be a smooth surface endowed with a Poisson structure $\Pi$ or
equivalently, an effective anticanonical divisor $C=\mathrm{Pf}(\Pi)$.
Let $\tilde S^2$ denote the blowup of $S^2=S\times S$ in the diagonal, with exceptional divisor $E_2
=\P(\Omega^1_S)$. Then we have a diagram
\eqspl{}{
\begin{matrix}
&&\tilde S^2&&\\
&p\swarrow&&\searrow q&\\
S\sbr 2.&&&&S^2
\end{matrix}
} with both $p$ and $q$ (simply) ramified along $E_2$,
which goes isomorphically via $p$ to the discriminant $\Delta_s$.

Now the product Poisson structure $\Pi^2$ on $S^2$ pulls back to a meromorphic
Poisson structure $\tilde\Pi_2$ on  $\tilde S^2$, invariant under the flip involution
and
with a simple pole along $E_2$ ($\Pi_2$ degenerates along $C\times S\cup S\times C$).
Then $\tilde\Pi_2$ descends via $p$ to a 
meromorphic Poisson structure $\Pi\sbr 2.$ on $S\sbr 2.$ with
at worst a pole on the discriminant $\Delta_2$. We claim $\Pi\sbr 2.$ has in
fact no pole on $\Delta_2$, hence is holomorphic. This can be checked at
a generic point of $\Delta_2$, as follows.\par
Now locally near a generic point of $S$ we can write $\Pi=\langle V, \Phi\rangle$
where $\Phi$ is a symplectic form and $V$ is a covolume (dual volume) form. Up on the blowup
$\tilde S^2$, we get for the respective pullbacks
\[\tilde\Pi^2=\langle\tilde V^2,\tilde\Phi^2\rangle\]
Now $\tilde V^2$ has a simple pole along $E_2$, hence is the pullback of
a (regular, everywhere nondegenerate)
covolume form $V\sbr 2.$ on $S\sbr 2.$. By Beauville, $\tilde\Phi^2$ is
the pullback of a symplectic form $\Phi\sbr 2.$ on $S\sbr 2.$. Consequently,
\[\tilde\Pi^2=p^*(\Pi\spr 2.), \Pi\sbr 2.=\langle V\sbr 2., \Phi\sbr 2.\rangle\]
and $\Pi\sbr 2.$ is holomorphic, as claimed.\par
 Note that by the same argument, $\Pi\sbr 2.$
is generically nondegenerate on $\Delta_2$, so its Pfaffian $D=\pf(\Pi\sbr 2.)$
coincides
with the locus $S+C$ of schemes meeting $C$, and $D$ clearly has normal crossings
away from the locus $2C$ of 'double points on $C$', i.e. ideals of the
form $\I_p^2+\I_C, p\in C$.
\par
Let $\pi:X_2\to S\sbr 2.$ denote the blowup of $S\sbr 2.$ in the singular locus of $D$, i.e. $C\spr 2.$.
Let $E$ be the exceptional divisor and $\tilde D$ the proper transform of $D$. Then
we have
\[\pi^*(D)=\tilde D+2E, K_{X_2}=\pi^*(K_{S\sbr 2.})+E,\]
therefore
\[-K_{X_2}=\tilde D+E.\]
As above, the Poisson structure $\Pi\sbr 2.$ on $S\sbr 2.$ induces a Poisson structure
$\Pi_{X_2}$ on $X_2$
with Pfaffian divisor $\tilde D+E$. The following simple local calculation shows
that $\tilde D+E$ has normal crossings.\par

Let $x, y$ be local coordinates on $S$ so that $x$ defines $C$, and consider
a subscheme of the form $(x, y^2)$, a typical non-normal-crossing point of $D$.
Deformations of this scheme on $S$ are of the form $(x+a_0+a_1y, y^2+b_0+b_1y)$
so $(a_0, a_1, b_0, b_1)$ are local coordinates on $S\sbr 2.$. Then the equation of $D$,
the locus of schemes meeting $C$ nontrivially, is $a_0^2+b_0a_1^2 -b_1a_0a_1$.
The double locus $C\spr 2.$ is given by the equations $a_0, a_1$. Blowing this up gives,
on the relevant open,
\[a_1^2([a_0/a_1]^2+b_0+b_1[a_0/a_1])\]
where $a_1$ is the equation of $E$ and $[a_0/a_1]^2+b_0+b_1[a_0/a_1]$ 
is the equation of $\tilde D$. These are clearly transverse, so $\tilde D+E$
has normal crossings.\par
Now, it is easy to see that at a point of $C\sbr 2.\setminus \Delta_2$, $\Pi\sbr 2.$
is locally of the form
\[x_1\del_{x_1}\wedge\del_{y_1}+x_2\del_{x_2}\wedge\del_{y_2}\]
where $x_1, x_2$ are equations for $C\spr 2.$. This proves that $\Pi\sbr 2.$ is weakly
P-normal. Furthermore, by an easy computation,
the logarithmic vector fields $x_1\del_{x_1}, x_2\del_{x_2}$ lift to $X$ as holomorphic
vector fields which are vertical along $E$ and vanish on $\tilde D\cap E$
which is the singular locus of $\pf(\Pi_X)$. This implies that $\Pi_X$ 
vanishes along the singular locus of $\pf(\Pi_X)$, so $\Pi_{X_2}$ is P-normal.\par
In fact, by an elementary but lengthy calculation (see \S \ref{computation}), 
one can compute $\Pi\sbr 2.$ explicitly:
in the above coordinates, we have
\eqspl{2Pi}{
2\Pi\sbr 2.=2(b_0a_1-b_1a_0)\del_{a_0}\del_{b_0}-2a_0\del_{a_1}\del_{b_0}-a_1^2\del_{a_0}\del_{a_1}
-a_0\del_{a_0}\del_{b_1}-a_1\del_{a_1}\del_{b_1}
}
which shows that $\Pi\sbr 2.$ vanishes precisely on the locus $C\spr 2.$, with ideal $(a_0, a_1)$
and also shows again that the degeneracy scheme of $\Pi\sbr 2.$ is $D$ with equation
$a_0^2+b_0a_1^2 -b_1a_0a_1$
.\par

\par
In the general case $r\geq 2$, assuming $\Pi\sbr r.$ exists, 
 we will construct $\Pi\sbr r+1.$ by induction, based on the diagram
\eqspl{}{
\begin{matrix}
&&\tilde S\sbr r+1.&&\\
&p\swarrow&&\searrow q&\\
S\sbr r+1.&&&&S\sbr r.\times S
\end{matrix}
}
where $q$ is the blowing-up of the tautological subscheme, $\tilde S\sbr r+1.$
is the $(r, r+1)$ flag-Hilbert scheme, and $p$ is the forgetful map.
Inductively, $\Pi$ induces a Poisson stricture $\Pi\sbr r.$ on $S\sbr r.$,
whence a product Poisson structure  on $S\sbr r.\times S$
and thence a meromorphic Poisson
structure on $\tilde S\sbr r+1.$, which again descends to a Poisson
structure $\Pi\sbr r+1.$ on $S\sbr r+1.$, possibly with a pole along the
discriminant. Then a local analysis as above at a generic point of the discriminant,
which corresponds to a double point plus $r-1$ generic points of $S$,
shows that $\Pi\sbr r+1.$ is holomorphic.\par

\section{The log complexes}
\subsection{Cotangent log complex}
(see \cite{deligne-hodge-ii}, \cite{grif-schmid})
Let $X$ be a compact K\"ahler  manifold with holomorphic de Rham complex $\Omega^._X$.
Let $D$ be a reduced divisor with local normal crossings on $X$. Let $\tilde D$
be the normalization (=desingularization) of $D$. The sheaf of log
1-forms on $X$ with respect to $D$, denoted by $\Omega^1_X\langle \log D\rangle$
is by definition the kernel of the natural map
\eqspl{}{
\Omega^1_X(D)\to \Omega_\tD(D)}
In terms of a local coordinate system $x_1,...,x_n$ such  that $D$ has local
equation $x_1\cdots x_k$, $\Omega^1_X\langle\log D\rangle$ is generated by
$\d x_1/x_1,...,\d x_k/x_k, \d x_{k+1},...,\d x_n$. Therefore 
$\Omega^1_X\langle\log D\rangle$ is locally free.
Define $\Omega_X^.\langle \log D\rangle$ as the subalgebra of the meromorphic exterior
algebra $\Omega_X^.(*D)$ generated by $\Omega_X^1\llog{D}$.
Then each $\Omega^r\llog{D}$ is locally free. It is a theorem of
Deligne (\cite{deligne-hodge-iii}, (8.1.9); see also \cite{zucker},  \cite{steenbrink})
 that  the 'log Hodge-to-de Rham' spectral sequence
\eqspl{}{
E_1^{p,q}=H^q(X, \Omega_X^p\llog{D})\Rightarrow \mathbb H^{p+q}(X, \Omega^.\llog{D})
}
degenerates at $E_1$. By another theorem of Deligne (\cite{deligne-hodge-ii}; see also
\cite{grif-schmid}), we have
\eqspl{}{
\mathbb H^{i}(X, \Omega^.\llog{D})\simeq H^{i}(X\setminus D, \C).
}
\subsection{ log Schouten algebra}
We denote by $T^r\mlog{D}$ the dual of $\Omega^r\llog{D}$,
which is a locally free, full-rank subsheaf of $T^r$.
It is easy to check that the Schouten bracket extends to define a graded Lie algebra structure on 
\eqspl{}{T^.\mlog{D}=\bigoplus\limits_{r\geq 1} T^r\mlog{D}.
}
Now suppose that $\Pi$ is a pseudo-symplectic Poisson structure
with Pfaffian divisor $D$. Then it is elementary that for any local
branch $D_i$ of $D$, the codifferential $\delta$ (cf. \eqref{codifferential})
vanishes locally on the subcomplex of  $\Omega^._X$ generated by $\O(-D_i)$, and therefore
descends to a codifferential on $\Omega_{\tilde D}$ so that restriction
\[(\Omega^._X, \delta)\to (\Omega^._{\tilde D}, \delta)\]
is a map of complexes.
\subsection{Duality}
Under the assumption that $\Pi$ is a pseudo-symplectic Poisson structure
with Pfaffian divisor $D$, it is easy to check that the duality map $\bigwedge\Pi^\#$ extends to 
\eqspl{}{
\Pi^\#:(\Omega^.\llog{D}, \wedge, [,])\to (T^.\mlog{D}, \wedge, [,])
} which is a dgla homomorphism. 
\begin{lem}\label{duality-iso-lem}
If $\Pi$ is a P-normal Poisson structure, then $\Pi^\#$ is an isomorphism.
\end{lem}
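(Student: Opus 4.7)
The plan is to reduce to showing that the degree-one component $\Pi^\#\colon \Omega^1\llog{D} \to T^1\llog{D}$ is an isomorphism, since each higher-degree component is its $r$-th exterior power ($\Omega^r\llog{D}$ and $T^r\llog{D}$ being the $r$-th exterior powers of the respective rank-$2n$ locally free sheaves). An iso in degree one thus automatically gives isos in all degrees, and compatibility with $\wedge$ and $[,]$ is already built into the statement that $\Pi^\#$ is a dgla homomorphism.

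Off of $D$ we have $\Omega^1\llog{D} = \Omega^1_X$ and $T^1\llog{D} = T_X$, and $\Pi^\#$ is the standard duality, which is an isomorphism because $\Pi$ is symplectic on $X\setminus D$. At a smooth point of $D$ I would invoke Weinstein's normal form \eqref{normal-form-eq}: in local coordinates with $D = \{x_1 = 0\}$ and
\[
\Pi = x_1\partial_{x_1}\wedge \partial_{x_2} + \partial_{x_3}\wedge\partial_{x_4} + \cdots + \partial_{x_{2n-1}}\wedge\partial_{x_{2n}},
\]
the $\O$-linear map $\Pi^\#$ sends the log frame $dx_1/x_1, dx_2, dx_3, dx_4, \ldots, dx_{2n}$ of $\Omega^1\llog{D}$ to $\partial_{x_2},\, -x_1\partial_{x_1},\, -\partial_{x_4},\, \partial_{x_3},\ldots$, which is (up to signs and reordering) the standard log frame $x_1\partial_{x_1}, \partial_{x_2},\partial_{x_3},\ldots,\partial_{x_{2n}}$ of $T^1\llog{D}$. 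Hence $\Pi^\#$ is an isomorphism at every smooth point of $D$.

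The P-normal hypothesis now enters through a codimension argument: since $D$ has normal crossings and is smooth wherever $\Pi$ has corank exactly two, the singular locus $\Sigma$ of $D$ has codimension $\geq 2$ in $X$. By the two preceding cases, the morphism $\Pi^\#$ of rank-$2n$ locally free sheaves is an iso on $X \setminus \Sigma$, so $\det \Pi^\#$ is a holomorphic section of the line bundle $\det T^1\llog{D} \otimes (\det \Omega^1\llog{D})^{-1}$ that vanishes at most on $\Sigma$. On a smooth manifold, the zero locus of a holomorphic section of a line bundle is either empty or purely of codimension one, and it cannot be contained in the codimension-$\geq 2$ subset $\Sigma$; hence the section vanishes nowhere, and $\Pi^\#$ is an iso in degree one.

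The only substantive calculation is the local Weinstein check at smooth points of $D$; the behavior at the normal-crossing singular strata, where $\Pi$ has higher corank, requires no direct analysis, since it is dispatched automatically by the Hartogs-type codimension argument once the rank-$2n$ locally free sheaves have been identified.
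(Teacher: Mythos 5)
Your proof is correct and follows essentially the same route as the paper's: an explicit check via the Weinstein normal form at smooth points of $D$ (with the same frames and images), followed by the observation that a morphism of locally free sheaves of equal rank which is an isomorphism off a codimension-$\geq 2$ set (here $\sing D$) must be an isomorphism everywhere, since its degeneracy locus is cut out by a determinant and hence is empty or of pure codimension one. Your explicit reduction to the degree-one component via exterior powers is left implicit in the paper but is the intended reading.
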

\begin{proof}
Obviously $\Pi^\#$ is locally an isomorphism off $D$.
At smooth points of $D$,
in terms of the normal form \eqref{normal-form-eq}, $\Pi^\#$ 
sends a local generator $dx_{1}/x_{1}$ to $\del_{x_{2}}$
and $dx_{2}$ to $-x_{1}\del_{x_{1}}$ while
\[\Pi^\#(dx_{2i-1})=\del_{x_{2i}}, \Pi^\#(dx_{2i})=-\del_{x_{2i-1}}, i=2,...,n.\]
It follows that $\Pi^\#$ is an isomorphism off the singular locus of $D$.
Because $\Pi^\#$ is a map of locally free sheaves of the same rank,
its degeneracy locus is of codimension at most 1. Therefore
$\Pi^\#$ is an isomorphism.
\end{proof}
\subsection{Normal form}
We now derive a local normal form for P-normal Poisson structures,
refining Weinstein's result. The result is elementary, but apparently not explicitly mentioned
in the literature, therefore we will give a very detailed proof.
\begin{prop}\label{normal-form-prop}
Let $\Pi$ be a P-normal Poisson structure with Pfaffian divisor $D$. Then
locally at a point where $D$ has multiplicity $k$, we can find a local coordinate system
$x_1,y_1, ...,x_{n}, y_n$ such that $D$ has equation $x_1\cdots x_k$ and $\Pi$ has the form
\eqspl{p-normal-eq}{x_1\del_{x_1}\wedge \del_{y_1}+...+x_{k}\del_{x_{k}}\wedge\del_{y_{k}}
+\del_{x_{k+1}}\wedge\del_{y_{k+1}}+...+\del_{x_{n}}\wedge\del_{y_{n}}.
}
\end{prop}
The case $k=1$, i.e. smooth points of $D$, follows from Minstein's structure theorem for Poisson structures
\cite{dufour-zung}, Thm. 1.4.5 p.14. 
\begin{cor}\label{p-wp-cor}
A P-normal Poisson structure is weakly P-normal.\qed
\end{cor}
\begin{proof}[Proof of theorem]
Working locally at a point $p$ where $\Pi$ has corank $k$, it is easy to see first of all, using Weinstein's theorem,
that we may assume $k=n$. Consider any local coordinate system
$(x_i)$ in which  $D$ has equation $x_1x_3...x_{2n-1}$. Now I claim that we may assume 
that the vector field 
\[v=\Pi^\#(dx_1/x_1)=\Pi^\#(\dlog(x_1))\] 
is nonvanishing at $p$
as plain (not logarithmic) vector field (hence, passing to a smaller
neighborhood, $v$ is nowhere vanishing). If this were false, then
\[\Pi^\#(dx_{2i-1}/x_{2i-1})(p)=0, i=1,...,n\]
(again, value as plain vector field).
On the other hand, we may replace each $x_{2i-1}$ by $x_{2i-1}\exp(x_{2i})$ to get another
local equation for $D$, and consequently
\[\Pi^\#(dx_{2i})(p)=\Pi^\#(d\log(x_{2i-1}\exp(x_{2i})))(p)-\Pi^\#(d\log(x_{2i-1}))=0.\]
This implies that every element in the image of $\Pi^\#$ vanishes at $p$, which is a
contradiction, proving our claim.\par
Now we can choose a function $y_1$ such that $v(y_1)=\langle dy_1, v\rangle=1$. 
Thus,
\[\Pi^\#\carets{dx_1/x_1\wedge dy_1}=v(y_1)=1.\]
\par
Next, I claim that by modifying each $x_{2i-1}, i>1$ by a unit factor $\exp(f_i)$, 
thus leaving $D$ unchanged, we may assume
\eqspl{perp-x}{\Pi^\#\carets{d\log(x_1)\wedge d\log(x_i)}=0, \forall i>1.}
Indeed, writing $x'_{2i-1}=x_{2i-1}\exp(f_i)$
and noting that $\Pi^\#(d\log(x_1)\wedge\omega)=\carets{v, \omega}$ for any 1-form $\omega$ (by
definition of $\Pi^\#$)
the equation  \[\Pi^\#\carets{d\log(x_1)\wedge d\log(x'_{2i-1})}=0\]
becomes the ODE in $f_i$:
\[v(f_i)=v(x_{2i-1})\]
which admits a solution $f_i$ (uniquely determined by the condition of
vanishing on the transverse hypersurface $y_1=0$).\par
Now clearly $dx_1, dx_3, ...,dx_{2n-1}$ are linearly independent sections
of the subbundle $\ker(v)\subset\Omega_X$, i.e. $x_1,x_3, ...,x_{2n-1}$
descend to part of a coordinate system on the quotient manifold of $X$
by the vector field $v$. Consequently
we may complete the functions 
$x_1, y_1, x_3,...,x_{2n-1}, $ to a local coordinate system $(x_1, y_1, x_3 y_3,...,x_{2n-1},y_{2n-1})$
on $X$ such that
\[v(y_{2i-1})=\Pi^\#\carets{d\log(x_1)\wedge dy_{2i-1}}=0, i>1.\]
For this coordinate system, we have
\[v=\del_{y_1}.\]
We let $X_0$ be the factor manifold
of $X$ by the foliation $A$ determined by $\del_{x_1}, \del_{y_1}$, which is naturally isomorphic
to each leaf of the foliation  $F$ determined by $dx_1, dy_1$  (i.e. the level sets of $(x_1, y_1)$). 
Thus we have as well
\eqspl{perp-y}{
\Pi^\#\carets{d\log(x_1)\wedge dy_{2i-1}}=v(y_{2i-1})=0, \forall i>1.
}
Note that the bivector
\[\Pi_0:=\Pi-x_1\del_{x_1}\wedge v=\Pi-x_1\del_{x_1}\wedge\del_{y_1}\]
is tangent to the leaves of the codimension-2 foliation $F$, i.e. can be written
in terms of the $\del_{x_{2i-1}}, \del_{y_{2i-1}}, i>1$ (but a priori with coefficients
depending on $x_1, y_1$). Now calculating
\[0=[\Pi,\Pi]=2[x_1\del_{x_1}\wedge\del_{y_1}, \Pi_0]+[\Pi_0, \Pi_0]\]
and considering separately the mixed terms in $x_1, y_1$ and $x_{2i-1}, y_{2i-1}, i>1$, and those
in the $x_{2i-1}, y_{2i-1}, i>1$ only, we conclude that $\Pi_0$ descends to a Poisson structure on $X_0$,
clearly P-normal with Pfaffian divisor $x_2,...,x_n$.
Applying induction, $\Pi_0$ has a normal form as desired on 
$X_0$, hence we get the desired normal form for $\Pi$.

\end{proof}
\subsection{Weak P-normality}\label{wp-sec}
\begin{prop}\label{lifting-prop}
Let $(X, \Pi)$ be a weakly P-normal Poisson manifold of dimension $2n$. Let $\pi:\tilde X\to X$
be the birational morphism obtained by blowing up $\pf^n(\Pi)$, then the proper transform of $\pf^{n-1}(\Pi)$,
etc. Then $\Pi$ lifts to a holomorphic P-normal Poisson structure on $\tilde X$.
\end{prop}
\begin{proof}
To begin with, we show that $\Pi$ lifts holomorphically, i.e. the lifted meromorphic bivector
has no poles on any component of the exceptional divisor. This can be checked locally
at a general point of a component of that exceptional divisor. Such a point
sits over a general point of some $\pf^k(\Pi)$, where $\Pi$
admits the normal form \eqref{p-normal-eq}. Blowing up, we can write, on a typical open
set in the blowup, 
\[x_{2i-1}=u_ix_1, \del_{x_{2i-1}}=(\del_{u_i}-u_i\del_{x_1})/x_1, i>1.
\]
Thus, the lifted bivector $\Pi'$ is holomorphic, as claimed. 
\par Next, we show that the lifted structure $\Pi'$ is P-normal, i.e.
that is has corank $>2$ on the singular locus of its Pfaffian divisor.
By the above computation, $\pf(\Pi')$ coincides with $\pi^*(\Pi)_{\mathrm{red}}$.
Moreover, 
at least in the open set considered above, 
this divisor is given by $x_1u_2...u_k$ and $\Pi'$ has corank $>2$ on the singular locus
of that divisor, i.e. intersections of pairs of smooth branches. By our assumption of weak P-normality,
the latter locus is dense in the entire singular locus of $\pf(\Pi')$, and it follows that $\Pi'$
has corank $>2$ on its entire singular locus, which proves P-normality for $\Pi'$.
\end{proof}
We will call $\tilde X$, together with the P-normal Poisson structure lifted from $X$,
the \emph{P-normal resolution} of the weakly P-normal Poisson manifold $(X, \Pi)$.
\section{Proof of Theorem}
We will assume to begin with that $(X, \Pi)$ is P-normal.
We will prove first that $(X, \Pi)$ is unobstructed.
\subsection{Step 1: from log differentials to log tangent vectors} 
From Deligne's 
$E_1$-degeneration theorem it follows that $d$ induces the 
zero
map on $\HH^1(\Omega^.\llog{D}), D=\pf(\Pi)$. Therefore by the Cartan formula it follows
in the usual manner  that
the bracket-induced map (not the wedge product map)
\[\HH^i(\Omega^._X\llog{D})\otimes \HH^j(\Omega^._X(\llog{D})
\to \HH^{i+j}(\Omega^._X(\llog{D})
\]
vanishes.
Therefore by the duality Lemma \ref{duality-iso-lem}, the same is true of the bracket-induced map

\[\HH^i(T^._X\mlog{D})\otimes \HH^j(T^._X(\mlog{D})
\to \HH^{i+j}(T^._X(\mlog{D})
\]
Consequently, all the obstruction maps vanish:
\[\sym^m\HH^1(T^._X\mlog{D})\to \sym^{m-2}\HH^1(T^._X\mlog{D})\otimes \HH^2(T^._X).\]
Now the $m$th order Poisson 
deformation space is computed by the Jacobi (or Quillen) complex
$J_m(T^._X\mlog{D})$
(\cite{cid} or \cite{atom}) .
By the above, the $E_1$ spectral sequence 

\[E_1^{p,q}=H^q({\bigwedge\limits^p}_\C T^._X\mlog{D})
\Rightarrow \HH^i(J_m(T^._X\mlog{D}))\]
degenerates at $E_1$: 
Therefore the log-Schouten dgla $T^._X\mlog{D}$ has unobstructed deformations.
\subsection{Step 2: from log to ordinary tangent vectors}
Heuristically at least, 
because $D$ is the Pfaffian divisor of $\Pi$, a Poisson deformation of $(X, \Pi)$
automatically extends to a deformation of $D$ as well. This implies that the
map on deformation theories induced by the
inclusion $i:T^._X\mlog{D}\to T^._X$ admits a right inverse.
Therefore $T^._X$ has unobstructed deformations, i.e. $(X, \Pi)$ is unobstructed
as Poisson manifold.\par
Formally speaking, consider the complex $K^.$ defined by
\eqspl{}{
K^0=T;\\
K^1=T^2\oplus N^0_D;\\
K^i=T^{i+1}\oplus T^{i-1}\otimes N^0_D, i\geq 2, 
}
$N^0_D\subset N_D=\O_D(D)$
being the 'locally trivial' subsheaf of the normal sheaf, image of $T_X\to N_D$;
the differentials are given by:
$T^i\to T^{i+1}$ is Poisson differential  $[.,\Pi]$,  and the map
$T^i\to T^{i-1}\otimes N^0_D$ is extended via tensor product from the canonical
surjective  map
$T\to N^0_D$ (with kernel $T\llog{D}$). 
The differential is by definition zero on the $T^{i-1}\otimes\O_D(D)$ summand.
Now there  is an obvious inclusion map
\[T^.\mlog {D}\to K^.\]
which is a quasi-isomorphism,
and a projection
\[K^.\to T^..\]
On the other hand, there is also a map
\[T^.\to K^.\]
where $T^i\to T^i$ is the identity and $T^i\to T^{i-2}$ is extended via tensor product
from the map
\eqspl{}{
T^2\to N^0_D\subset \O_D(-K_X),\\
u\mapsto nu\wedge \Pi^{n-1}|_D
} (the latter is just the derivative of the map
$\Pi\mapsto \Pi^n$). Note that this map indeed goes into the $N^0_D$
subsheaf due to the fact that $\Pi^{n-1}$ vanishes on $\sing(D)$, which follows from the
assumption of P-normality.
Thus in the derived category, $T^.$ is a direct summand  of $K^.\sim T^.\mlog{D}$,
 and since the latter has vanishing obstructions, so does the former.
  This completes the proof that $(X, \Pi)$ is unobstructed, i.e.
  assertion (i) of Theorem \ref{main-thm} . \par
 For assertion (ii),  the local triviality of the induced deformation of $D=\pf(\Pi)$
  follows from the fact that the map $T^2\to N_D$ above goes into $N^0_D$.
  In fact, deformations of $(X,D)$ generally are controlled by the complex
  \[T\to N_D,\]
  while the locally trivial deformations (i.e. those where $D$ deforms locally trivially)
  are controlled by the subcomplex
  \[(T\to N^0_D)\sim T\mlog{D}.\]
  In our case we get a map
  \[T^.\to (T\to N_D)\]
  which factors through $(T\to N^0_D)\simeq T\mlog{D}$. Indeed this map is just the composite
  of the map $T^.\to T^.\mlog{D}$ constructed above with the truncation map 
  $T^.\mlog{D}\to T\mlog{D}$.
  \par As for assertion (iii),
because locally trivial deformations of $(X, D)$ 
 are controlled by $T\mlog{D}$, the proof of their unobstructedness 
 is identical, replacing the complexes $ T^.\mlog{D}, \Omega^.\llog{D}$ 
  by their zeroth component (see the Appendix below for a more
  general statement). 
 Now consider the following diagram
 \eqspl{}{
 \begin{matrix}
 T^.\mlog{D}&\to&T\mlog{D}\\
 \downarrow&&\downarrow\\
 T^.&\to&T
 \end{matrix}
 }inducing
  \eqspl{}{
  \begin{matrix}
  \HH(T^.\mlog{D})&\to&H(T\mlog{D})\\
  \downarrow&&\downarrow\\
  \HH(T^.)&\to&H(T).
  \end{matrix}
  } Here the left vertical arrow is a direct summand projection.
The top horizonatal arrow
is surjective by Deligne's $E_1$ degeneration of the spectral sequence for $\HH(\Omega^.\llog{D})$.
Thus, the deformation space corresponding to $T^.\mlog{D}$ maps smoothly to
the locally trivial deformations of $(X, D)$, as well as to the Poisson deformations
of $(X, \Pi)$, proving assertion (iii).
   This completes the proof  of Theorem \ref{main-thm} in case $(X, \Pi)$ is P-normal.
   \subsection{Step 3: weakly P-normal case}
   It remains to
  consider case where $(X, \Pi)$ is only assumed weakly P-normal. 
  As in \S \ref{wp-sec}, let $\pi:(X', \Pi')\to (X, \Pi)$ be the P-normal resolution, i.e. the
   iterated blowup of $(X, \Pi)$ along its Pfaffian stratification
  \[\pf^n(\Pi)\subset \pf^{n-1}(\Pi)\subset...\subset\pf(\Pi).\]
  Then by Proposition \ref{lifting-prop}, 
  $\Pi'$ is the holomorphic lift of $\Pi$ and $\Pi'$ is P-normal. Now a deformation 
  of $(X, \Pi)$ induces a locally trivial deformation of $\pf(\Pi)$, hence a deformation of
  the above Pfaffian stratification, hence a deformation of the blowup $(X', \Pi')$.
  Conversely, a deformation of $(X', \Pi')$ induces a locally trivial deformation
  of $\pf(\Pi')$ and hence of the individual exceptional divisors of $\pi$, which can then
  be blown down to yield a deformation of $(X, \Pi)$. Thus, the 
  formal deformation functors of $(X, \Pi)$ and
  $(X', \Pi')$ are equivalent, and since $(X', \Pi')$ has unobstructed deformations, so does 
  $(X, \Pi)$.


%
%
\section{An obstruction}
It turns out that when $\Pi$ is P-normal, the singularities of $D$ are not 'accidental' and in fact are locally constant
over the Poisson moduli. This leads one to suspect the singular strata of a normally-crossing
$D$ may represent some kind of characteristic class
associated to $\Pi$, even though  these singular strata 
fail to have the generic codimension in terms of the rank
of the tensor $\Pi$ considered as skew-symmetric bundle map. One result in this direction is given
in Proposition \ref{obstruction-prop} below, which gives a Chern-polynomial
obstruction to the smoothness (even in codimension 3) of $D$ (i.e. when the polynomial
is nonzero, the singular locus of $D$ has codimension 3 in $D$ (codimension 4 in $X$)).
This result is actually a special case of a theorem of Gualtieri and Pym \cite{gualtieri-pym},
but as the proof is short and elementary, we include it.
\par
First a definition. We say that a complex manifold $X$ is \emph{compact through codimension $c$}
if it is of the form $X=Y\setminus Z$ with $Y$ a compact complex manifold 
and $Z$ an analytic subset of codimension
$>c$. For example, if $E$ is a vector bundle on $Y$ carrying a skew-symmetric bundle map
$\phi$ that degenerates in the generic codimension, then the open set where $\phi$
has corank $\leq 2$ is compact through codimension 5.
\begin{prop}\label{obstruction-prop}
Let $X$ be a complex manifold compact through codimension 4, and suppose $X$ carries a
generically nondegenerate Poisson structure $\Pi$ 
with smooth Pfaffian divisor $D$ on which $\Pi$ has corank exactly 2. Then, where $c_i=c_i(X)$,
we have
\eqspl{obstruction-eq}{c_1(c_1c_2-c_3)=0.}
\end{prop}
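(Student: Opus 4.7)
My plan is to reduce the obstruction to a Chern-class identity on the Pfaffian divisor $D$ itself, which then pushes forward to $X$ via $c_1=[D]$. The corank-exactly-$2$ hypothesis turns the anchor map $\Pi^{\#}$ into a rank-drop of constant corank along $D$, so everything can be organised around a rank-$2$ bundle on $D$.

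The first step is to record two canonical exact sequences of bundles on $D$. Since $\Pi^{\#}|_D$ has constant rank $2n-2$, its kernel $K$ is a rank-$2$ sub-bundle of $\Omega_X|_D$; by the skew-symmetry of $\Pi^{\#}$ the cokernel is canonically $K^*$, giving
\[
0\to K\to \Omega_X|_D \xrightarrow{\Pi^{\#}|_D} T_X|_D \to K^*\to 0.
\]
Secondly, the Weinstein normal form \eqref{normal-form-eq}, which is available at every point of $D$ under our hypotheses, shows that $\Pi^{\#}|_D$ annihilates the conormal direction. Hence $N_D^{*}=\mathcal{O}_X(-D)|_D$ is a line sub-bundle of $K$, and we get a short exact sequence $0\to N_D^{*}\to K\to L^{*}\to 0$ with $L^{*}:=K/N_D^{*}$.

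Next I would extract Chern-class information. Splitting the four-term sequence in two and dividing total Chern classes yields
\[
c(K^*)/c(K)\;=\;c(T_X|_D)/c(\Omega_X|_D)\quad\text{on }D.
\]
Comparing odd Newton power sums of Chern roots on the two sides gives $p_k(T_X)|_D=p_k(K^*)$ for every odd $k\ge 1$. From the sub-sequence I read off $c_1(K^*)=c_1|_D+c_1(L)$ and $c_2(K^*)=c_1|_D\cdot c_1(L)$, using $c_1(N_D)=[D]|_D=c_1|_D$. The degree-$1$ identity then forces $c_1(L)=0$ on $D$, and hence $c_2(K^*)=0$; feeding these into the degree-$3$ identity expanded by Newton,
\[
(c_1^3-3c_1c_2+3c_3)\big|_D\;=\;c_1(K^*)^3-3c_1(K^*)c_2(K^*)\;=\;c_1|_D^{\,3},
\]
I conclude $(c_1c_2-c_3)|_D=0$. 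Pushing forward along $j:D\hookrightarrow X$ with $c_1=[D]$ then gives $c_1(c_1c_2-c_3)=j_{*}\bigl((c_1c_2-c_3)|_D\bigr)=0$, which is the desired obstruction.

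The delicate step I anticipate is the identification of the cokernel with the \emph{dual} $K^{*}$, rather than just some abstract rank-$2$ bundle; this is the concrete manifestation of the self-duality of $\Pi^{\#}$ and must be tracked through the skew-symmetric structure. A secondary check is that $N_D^{*}\hookrightarrow K$ is globally a sub-bundle inclusion, which relies on the availability of the Weinstein form at every point of $D$ (using both smoothness of $D$ and the corank-$2$ hypothesis). The compact-through-codimension-$4$ condition enters only at the final pushforward, guaranteeing that the identity lives meaningfully in the ambient cohomology.
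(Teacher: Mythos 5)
Your argument is correct and follows essentially the same route as the paper: both proofs identify the rank-$2$ kernel of $\Pi^{\#}|_D$ containing the conormal bundle, use the self-duality of the symplectic quotient to kill the odd Chern contributions, deduce $(c_1c_2-c_3)|_D=0$, and push forward using $D\in|-K_X|$. Your bookkeeping via the four-term sequence and odd Newton power sums is just a repackaging of the paper's computation of $c_3(\Omega_D)=0$ via adjunction.
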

\begin{proof}
As $D$ is an anticanonical divisor on $X$, it is Calabi-Yau, i.e. $c_1(\Omega_D)=0$.
Moreover, if $M$ denotes the kernel of $\Pi^\#$ on $\Omega_X|_D$, then by assumption $M$ is
a rank-2 bundle and by Lemma \ref{conormal-in-kernel-lem}, $M $ has the conormal
line bundle  $\check N$ of $D$ as subbundle.
Therefore $M/\check N$ is a line subbundle of $\Omega_D$, such that
$\Omega_D/(M/\check N)$ carries a nondegenerate alternating form given by
the restriction of $\Pi$, hence is self-dual and has vanishing odd Chern classes. 
Because $c_1(D)=0$ as well, it follows that
\[c_{\mathrm{odd}}(D)=0.\]
Then we get the result via Gysin by calculating $c_3(D)$ from
\[c(\Omega_D)=\frac{c(\Omega_X)}{1+c_1(\Omega_X)}.\]
\end{proof}
\begin{remarks}
(i) The generic codimension where the skew-symmetric map $\Pi^\#$ has corank $>2$
(i.e. $\geq 4$) is 
$6=\binom{4}{2}$ (see \cite{harris-tu} or \cite{ful}, 14.4.11),
so the proposition gives an obstruction for a manifold $X$ to support a Poisson structure
with 'generic singularities',
specifically a dimensionally proper 'second' (corank $\geq 4$) degeneracy locus
$S$ and a Pfaffian smooth outside $S$.
\par (ii) Note that the condition \eqref{obstruction-eq} fails for $\P^4$, so $\P^4$ has no
'nice' Poisson structure in the foregoing sense ($D$ smooth), though it might still have
one with $D$ normally crossing. There is, in fact, a family of
pseudo-symplectic Poisson
structures on $\P^4$ due to Feigin-Odesskii \cite{feigin-odesskii},
whose Pfaffian divisor is singular on an elliptic curve. It seems likely,
but we haven't checked, that these Poisson structure lifts to P-normal ones
on the blowup of $\P^4$ along the elliptic curve. If so, it would presumably
follow, as in the weakly P-normal case, that these structures
are unobstructed.\par
(iii) Gualtieri and Pym \cite{gualtieri-pym} have proven, assuming
only that $D$ is reduced, that the singular locus
 $\sing(D)$ has codimension at most 3 in $X$, and if the codimension
equals 3, then the fundamental class of $\sing(D)$
is $c_1c_2-c_3$. It would be interesting to find a formula for the
fundamental class of $\sing(D)$ when $D$ is P-normal, so that
the singular locus has codimension 2 in $X$.
\end{remarks}
\section{Appendix : manifolds with a normal-crossing anticanonical divisor}\label{appendix}
The foregoing methods yield a short  proof of the following result,
obtained earlier at least in the case $D$ smooth by Iacono \cite{iacono}
 (no Poisson structure is assumed).\footnote{We thank D. Iacono
for pointing out that 'locally trivial' is required in the statement}
\begin{thm} Let $X$ be compact K\"ahler with 
an effective locally normal-crossing anticanonical divisor $D$.
Then $(X,D)$ has unobstructed locally trivial deformations.\end{thm}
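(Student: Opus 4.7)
The plan is to adapt Step 1 of the proof of Theorem \ref{main-thm}, replacing the full log Schouten dgla $T^.\llog{D}$ by its lowest-weight summand $T\llog{D}$. As noted already in the proof of assertion (iii) of Theorem \ref{main-thm}, locally trivial deformations of the pair $(X, D)$ are controlled by the Lie algebra $T\llog{D}$, with obstructions lying in $H^2(T\llog{D})$; so the task reduces to showing that the symmetric obstruction maps
\[\sym^m H^1(T\llog{D})\to H^2(T\llog{D}),\qquad m\geq 2,\]
all vanish, which in turn follows from vanishing of the bracket-induced cup product on $H^*(T\llog{D})$.

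The key observation is that $D\sim -K_X$ supplies a log Calabi-Yau structure: a choice of isomorphism $\omega_X(D)\simeq \O_X$ produces a nowhere-vanishing global log volume form $\Omega\in H^0(\Omega^n\llog{D})$ with $d\Omega=0$, and contraction $u\mapsto i_u\Omega$ yields a sheaf isomorphism
\[T\llog{D}\stackrel{\sim}{\to}\Omega^{n-1}\llog{D},\]
as one checks in the standard adapted coordinates for $D$ from Section 2.1: each local generator $x_i\del_{x_i}$ or $\del_{x_j}$ contracts $\Omega$ to a log $(n-1)$-form. This plays the role of Lemma \ref{duality-iso-lem} in the present Poisson-free setting.

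Next I would invoke Deligne's $E_1$-degeneration theorem for $\Omega^.\llog{D}$, which forces $d=0$ on $\HH^*(\Omega^.\llog{D})$. The standard Tian--Todorov calculation via the Cartan formula $L_u=di_u+i_u d$ together with $d\Omega=0$ gives
\[i_{[u,v]}\Omega\ =\ d(i_u i_v\Omega)+i_u d(i_v\Omega)-i_v d(i_u\Omega),\]
so after transferring the bracket on $T\llog{D}$ to $\Omega^{n-1}\llog{D}$ via $u\mapsto i_u\Omega$, every term on the right vanishes in hypercohomology by $E_1$-degeneration (each is either $d$-exact or has $d$ applied in the innermost position). Applied to iterated brackets this kills every higher symmetric obstruction map, and standard Maurer--Cartan / Goldman--Millson theory then yields unobstructedness. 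The main technical point, and likely the only nontrivial one, is ensuring that all the operators in the Tian--Todorov identity stay within the log category at crossing points of $D$, so that the passage between $T\llog{D}$ and $\Omega^{n-1}\llog{D}$ respects brackets and $d$ at the chain level; given the explicit local description of $\Omega^.\llog{D}$ as generated by $\d x_i/x_i$ and $\d x_j$ and the adapted form of $\Omega$, this reduces to a routine local check.
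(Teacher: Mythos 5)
Your proposal follows essentially the same route as the paper: the identification $T_X\llog{D}\simeq\Omega^{n-1}_X\llog{D}$ coming from the anticanonical (log Calabi--Yau) trivialization, Deligne's $E_1$-degeneration forcing $d=0$ on $\HH^*(\Omega^._X\llog{D})$, and the Cartan/Tian--Todorov identity to kill the bracket-induced obstruction pairing. Your version merely makes explicit the log volume form $\Omega$ and the identity $i_{[u,v]}\Omega=d(i_ui_v\Omega)+i_ud(i_v\Omega)-i_vd(i_u\Omega)$, which is exactly what the paper's ``identification with the pairing induced by bracket'' amounts to, so the argument is correct and matches the paper's.
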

The assertion amounts to saying that the dgla $T_X\llog{D}$ has unobstructed deformations.
The isomorophism \[T_X\simeq \Omega_X^{n-1}(-K_X)\simeq \Omega_X^{n-1}(D),n=\dim(X),\]
induces
\[T_X\mlog{D}\simeq \Omega^{n-1}_X\llog{D}\]
Note that we have the  interior multiplication pairing
\[T_X\mlog{D}\times\Omega_X^i\llog{D}\to\Omega^{i-1}_X\llog{D}\]
and the Cartan formula for Lie derivative
\[L_v(\omega)=i_vd\omega\pm d(i_v(\omega)).\]
Deligne's $E_1$-degeneration shows that $d$ induces the zero map on $H^.(\Omega^._X\llog{D})$,
and it follows that the pairing induced by Lie derivative vanishes:
\[H^1(T_X\mlog{D})\times H^1(\Omega_X^{n-1}\llog{D})\to H^2(\Omega^{n-1}_X\llog{D}).\]
By the above identification, this can be identified with the pairing on $H^1(T_X\mlog{D})$
induced by bracket, i.e. the obstruction pairing. Therefore the obstruction vanishes, so
$(X,D)$ has unobstructed deformations.\qed\par
It is not in general true that $|-K_X|$ is constant under deformations on $X$, even if it has a smooth 
member, as shown by the example of $\P^2$ blown up in 10 points on a smooth cubic.
Therefore, $\Def(X,D)$ is in general a smooth fibration
over a smooth proper subvariety of  $\Def(X)$.
\section{Appendix: a computation}\label{computation} Here we will compute the Poisson bivector
$\Pi\sbr 2.$ seen in \S \ref{hilb}, Display \eqref{2Pi},
 where $\Pi$ is a Poisson bivector on $S$ locally
of the form $x\del_x\wedge\del_y$. Henceforth, we will omit the wedge product
symbols and write $\del x$ for $\del/\del x$ etc. We can write
\eqspl{}{ 2\Pi\sbr 2.=\Pi_1+\Pi_2,
\Pi_1=(x_1\del x_1+\del x_2)(\del y_1+\del y_2)=:t^1_xt^1_y,\\
\Pi_2=\left ((y_1-y_2)(x_1\del x_1-x_2\del x_2)\right )\cdot \frac{\del y_1-\del y_2}{y_1-y_2}
=:t^2_x\cdot t^2_y.
}
Now the rational vector fields $t^i_x, t^i_y$ on $S\sbr 2.$ can be expressed in terms
of the frame $\del a_0, \del a_1, \del b_0, \del b_1$ by evaluating on the
$a_i, b_j$, which yields, on setting $\delta_y=(y_1-y_2)^2=b_1^2-4b_2$,
\eqspl{}{
t^1_x=a_0\del a_0+a_1\del a_1, t^1_y=-(b_1\del b_0+\del b_1),\\
t^2_x=(-2b_0a_1+b_1a_0)\del a_0+(2a_0-a_1b_1)\del a_1,\\
t^2_y=-\del b_0+\frac{a_1b_1}{\delta_y}\del a_0-2\frac{a_1}{\delta_y}\del a_1.
}
Consequently,
\eqspl{}{
\Pi_2=(2b_0a_1-b_1a_0)\del a_0\del b_0-a_1^2\del a_0\del a_1-(2a_0-a_1b_1)\del a_1\del b_0.
}
Then a routine calculation yields \eqref{2Pi}.

\ds\ds\ds
\bibliographystyle{amsplain}
\bibliography{../mybib}
\end{document}